\newtheorem{Theorem}{Theorem}
\newtheorem{Cor}{Corollary}
\newtheorem{Prop}{Proposition}
\def\ra{\rightarrow}
\def\R{\mathbb{R}}
\def\Rp{\mathbb{R}_+}
\def\d{\partial}
\def\w#1{\widetilde{#1}}
\newcommand{\fw}{\ensuremath{\;|\;}}
\newcommand{\E}{\ensuremath{\mathcal{E}}}
\DeclareMathOperator{\Div}{div}
\DeclareMathOperator{\Flux}{Flux}
\begin{document}

\title{Global pointwise decay estimates for 
\\defocusing radial nonlinear wave equations
}

\author{Roger Bieli}
\author{Nikodem Szpak}
\affiliation{Max-Planck-Institut f\"{u}r
Gravitationsphysik, Albert-Einstein-Institut, Golm, Germany}
\date{\today}

\begin{abstract}
We prove global pointwise decay estimates for a class of defocusing semilinear wave equations in $n=3$ dimensions restricted to spherical symmetry. The technique is based on a conformal transformation and a suitable choice of the mapping adjusted to the nonlinearity. As a result we obtain a pointwise bound on the solutions for arbitrarily large Cauchy data, provided the solutions exist globally. The decay rates are identical with those for small data and hence seem to be optimal. A generalization beyond the spherical symmetry is suggested.
\end{abstract}


\maketitle

\section{Introduction}

We consider a class of nonlinear wave equations
\begin{equation} \label{eq:gwav}
  \d_t^2 \phi - \Delta \phi = f(\phi)
\end{equation}
in $n=3$ spatial dimensions where the nonlinearity $f(\phi)$ is of a defocusing type. It means that the nonlinear term has a repulsive action on the waves and focusing of waves is suppressed by an energy condition. Essential is the sign of the nonlinear term which is chosen such that the following conserved energy is positive definite
\begin{equation} \label{eq:gerg}
  E=\int_{\R^3} \left(\frac{1}{2}|\d_t \phi|^2 + \frac{1}{2}|\nabla \phi|^2+F(\phi)\right)\,d^3 x
\end{equation}
with $f=-F'$.
Equation \eqref{eq:gwav} has been intensively studied in the literature over a few decades, in particular in the case of a pure power nonlinearity $f(\phi)=-|\phi|^{p-1}\phi$.
Let us collect the most important results defining the context for this work.
The global existence of $C^2$ solutions to the Cauchy problem has been shown by J{\"o}rgens \cite{Joergens} in the energy subcritical case $1<p<5$ and later by Grillakis \cite{Grillakis} in the critical case $p=5$ (see also \cite{Struwe} for the spherically symmetric $p=5$ case) while not much is known about the supercritical case $p>5$. For more references related to global existence we recommend the book of Sogge \cite{Sogge-book}.
Uniform boundedness of solutions for $2<p<5$ has been proved by Pecher \cite{Pecher}.
Uniform decay $1/t^{1-\epsilon}$ and scattering have been proved for $3\leq p<5$ by Strauss \cite{Strauss_SemilinDecay}.
Bahouri and Shatah \cite{BahShat} have shown that finite energy solutions decay to zero for $p=5$ and
Hidano \cite{Hidano_SemilinScatt} has shown scattering and decay to zero for $2.5<p\leq 3$.
Ginibre and Velo \cite{GinVelo_scatt} have shown scattering in the energy space for $1<p<5$ in various dimensions, but in $n=3$ only for small data.
Scattering results imply, in some sense, that the solutions behave asymptotically like solutions of the linear equation. (For more scattering results we refer to the monograph of Strauss \cite{Strauss-book}.)


Here, we go further and study the pointwise behavior of solutions for $3\leq p<5$. Such results in dimension $n=3$ exist only for small data \cite{Asakura, Strauss-T, NS-Tails, NS-PB_Tails} and are based on perturbation techniques which cannot be generalized to large data. We extend the technique of conformal compactification developed by Choquet-Bruhat, Christodoulou and others in \cite{ChoqBruh-P-Segal,Christodoulou-Quasilin,Baez-Segal-Zhou}.
The novelty is that we use a suitably chosen conformal transformation, adjusted to the nonlinearity. In this setting, we first show uniform boundedness of the transformed solutions in the precompact region and then use the inverse transformation to get an extra decaying (conformal) factor and thus a pointwise decay estimate for the original solution. We claim that this estimate is optimal (for generic initial data) as it is identical with the one obtained for small data where it has been shown to be optimal \cite{NS-Tails, NS-PB_Tails}. Numerical simulations done by Bizo{\'n} \textit{et al} \cite{PB-private} also support this picture.

\subsection*{The main result and the method}

In the following we study the Cauchy problem for the radial semilinear wave equation
\begin{equation} \label{eq:wave-p}
  \d_t^2 \phi - \Delta \phi = -|\phi|^{p-1}\phi
\end{equation}
with the initial data of compact support $r\in[0,\alpha_p[$, where $\alpha_p>0$ will be specified later\footnote{Symmetries of the equation \eqref{eq:wave-p} allow for mapping any compactly supported initial data onto the interval $r\in[0,\alpha_p[$ at $t=1$.}, and chosen at time $t=1$
\begin{align}
  \phi(1,r)&=\phi_0,& \d_t\phi(1,r)&=\phi_1
\end{align}
and prove that the solutions satisfy for all $t\geq 1$ and $0\leq r \leq t$
\begin{equation} \label{eq:main-estimate}
  |\phi(t,r)| \leq \frac{C}{(1+t+r)(1+t-r)^{p-2}}
\end{equation}
with some constant $C$ depending only on $\phi_0, \phi_1$ and $p$.

Briefly, our method is based on a conformal mapping $(u,v)\ra(\w{u},\w{v})$ defined in the double-null coordinates $u=t+r$, $v=t-r$,
\begin{equation}\label{eq:uv-map}
  \w{u} := -\frac1{u^{p-2}},\qquad \w{v} := -\frac1{v^{p-2}}.
\end{equation}
The conformal factor $\Omega := \frac{\w{r}}{r}$ multiplies the transformed solution
$ \w{\phi}(\w{u},\w{v}) := \Omega^{-1} \phi(u,v) $
which satisfies the transformed wave equation
\begin{equation} \label{eq:wave-mapped}
 \w{\Box} \w{\phi} + \frac1{(p-2)^2} \left[\frac{\w{u}-\w{v}}{(-\w{v})^{1/(p-2)} - (-\w{u})^{1/(p-2)}} \right]^{p-1} \w{\phi} |\w{\phi}|^{p-1} = 0
\end{equation}
in a precompact region of spacetime. There, we are able to show boundedness of some pseudo-energy flux what we further use to show the uniform boundedness of $|\w{\phi}(\w{u},\w{v})|\leq \w{C}$. Finally, the inverse transformation provides the desired estimate
$$ |\phi(u,v)| \leq \w{C}\cdot \Omega = \w{C} \cdot \frac{1}{(uv)^{p-2}} \cdot \frac{u^{p-2}-v^{p-2}}{u-v} \leq \frac{C}{u\cdot v^{p-2}}. $$
The power $p-2$ in the mapping \eqref{eq:uv-map} cannot be increased, what would potentially lead to a stronger pointwise decay of the solutions, because then the factor multiplying the nonlinearity in \eqref{eq:wave-mapped} becomes singular and our boundedness theorems cannot be applied.

The estimate \eqref{eq:main-estimate} is also optimal in the sense of compatibility with the small data case. For small initial data it has been shown in \cite{NS-PB_Tails} that the solutions behave asymptotically, for large $t$ and fixed $r$, like
\begin{equation*}
  \phi(t,r) = \frac{C}{t^{p-1}} + {\cal{O}}(t^{-p})
\end{equation*}
where the constant $C$ can be expressed explicitly via $\phi_0$ and $\phi_1$. Hence, the estimate \eqref{eq:main-estimate} applied to the small data case gives the optimal decay rate.

The assumption of spherical symmetry is essential for the method to yield
optimal decay for $p>3$. On $\R \times \R^3$ the obvious analogue of the map
$(u,v)\ra(\w{u},\w{v})$ is no longer conformal, unless $p=3$, of course. The
reason is that radial and angular parts of the wave operator transform
diversely and the resulting equation is no longer semilinear. Applying the
transformation corresponding to $p=3$, which is conformal, to an equation
with nonlinearity of power $p>3$ still gives a global $1/(uv)$-decay result,
being however not optimal and independent of the actual power $p$.

\section{Conformal transformation} \label{sec:ct}

In the case $p=3$, the radial wave equation \eqref{eq:wave-p}, having
the explicit form \[ \d_t^2 \phi - \d_r^2 \phi - \frac2r \d_r \phi +
\phi^3 = 0, \] is not only invariant under time translation, $\phi(t,r)
\mapsto \phi(t+a,r)$, scaling, $\phi(t,r) \mapsto \lambda \phi(\lambda
t,\lambda r)$, and reflection, $\phi \mapsto -\phi$, but also under a
\emph{conformal inversion} which is given by
\begin{equation} \label{eq:convinv}
 \phi(t,r) \mapsto \frac1{t^2-r^2} \phi \biggl( \frac{-t}{t^2-r^2},
 \frac r{t^2-r^2} \biggr).
\end{equation}
This inversion maps solutions on the forward light-cone $K^+:=\{(t,r)
\fw 0\leq r < t\}$ to solutions on the backward light-cone $K^-:=\{(t,r)
\fw 0\leq r < -t\}$ of the origin and vice versa. Thus, establishing
boundedness of a solution on $K^-$ towards the future immediately implies
pointwise decay estimates for the transformed solution on $K^+$. In order
for this to yield optimal decay in the case of more general nonlinearities
it is necessary to also consider more general conformal transformations
which is what will be discussed in the following.

On $(t,r)\in\R\times\Rp$ consider null coordinates $u:=t+r$ and $v:=t-r$.
Then $u$ and $v$ are positive on $K^+$ and negative on $K^-$. For any $p>2$
define, with respect to the $(u,v)$-coordinate system, the map \[ \Phi:
K^+ \to K^-,\quad (u,v) \mapsto \biggl( -\frac1{u^{p-2}}, -\frac1{v^{p-2}}
\biggr). \] It is analytic with analytic inverse \[ \Phi^{-1}: K^- \to K^+,\
(u,v) \mapsto \biggl( (-u)^{-\frac1{p-2}}, (-v)^{-\frac1{p-2}} \biggr).\]
Furthermore, there exists a positive analytic function $\Omega>0$ on $K^+$
with the property that
\begin{equation} \label{eq:om}
 r \Omega = r \circ \Phi
\end{equation}
holds on $K^+$.  If $\eta = dt^2 - dr^2$ denotes the Minkowski metric
on $\R\times\Rp$ then its pullback by $\Phi$ satisfies \[ \Phi^* \eta
= \frac{(p-2)^2}{(t^2-r^2)^{p-1}} \eta \quad \textnormal{on } K^+,\]
which shows that mapping by $\Phi$ is conformal. In the case $p=3$ the
map $\phi \mapsto \Omega \Phi^* \phi=\Omega \cdot (\phi \circ \Phi)$
corresponds to the conformal inversion~\eqref{eq:convinv} since then
$\Omega = 1/(t^2-r^2)$. This conformal transformation is illustrated in
Figure~\ref{fig:ct}. Note that with \[ \alpha_p:=1-\frac1{2^\frac1{p-2}} \in
\ ]0,1[ \] the compact interval $I$ contained in $\{1\} \times [0,\alpha_p[$
implies that $J$ is compactly contained in $\{-1\} \times [0,1[$, where
$J$ is the intersection of the future of the curve $H := \Phi(I)$ with
the line $\{t=-1\}$. Moreover, for $p\geq 3$ one has $\alpha_p \leq 1/2$,

\begin{figure}
 \includegraphics[width=0.3\textwidth]{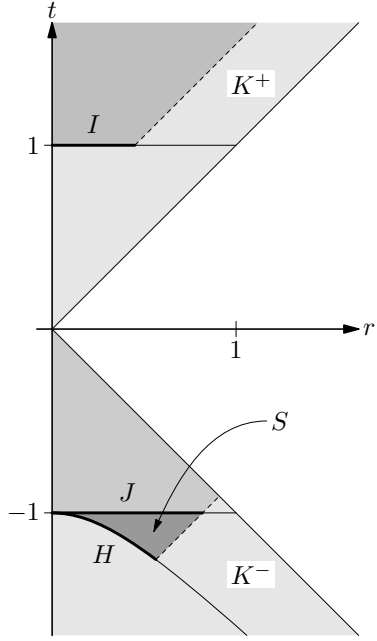}
 \caption{The function $\Phi$ maps the forward light-cone $K^+$ to the
  backward light-cone $K^-$ conformally and bianalytically. An interval $I$
  of the line $\{t=1\}$, containing the support of the original Cauchy data,
  is mapped to a curve $H$.  The future of the interval $I$ is mapped into
  the future of the curve $H$. Initial data on $H$ is evolved in the region
  $S$ to yield new initial data on the interval $J$ of the line $\{t=-1\}$.}
 \label{fig:ct}
\end{figure}

Suppose that $h \in C^2(K^-)$ is a twice continuously differentiable
function on $K^-$. Then $\Omega \Phi^* h$ is a function on $K^+$ of class
$C^2$ for which \[ \Box ( \Omega \Phi^* h ) = (\Box \Omega)\Phi^* h + \Omega
\frac{(p-2)^2}{(t^2-r^2)^{p-1}} \Phi^* \Box h, \] From relation~\eqref{eq:om}
it follows that $\Box \Omega = 0$ such that
\begin{equation} \label{eq:ct}
 \Box( \Omega \Phi^* h ) = (p-2)^2 (uv)^{-(p-1)} \Omega \Phi^*
 \Box h.
\end{equation}
Hence, if $p>2$ and $\phi \in C^2(K^+)$ is a classical solution of the
radial wave equation
\begin{equation} \label{eq:wav}
 \Box \phi + \phi |\phi|^{p-1} = 0
\end{equation}
on $K^+$, its conformal transformation $\psi:=\Phi_*(\Omega^{-1}
\phi) \in C^2(K^-)$ satisfies
\begin{equation} \label{eq:ctwav}
 \Box \psi + \frac1{(p-2)^2} \bigl[ \Phi_* (\Omega u v)^{p-1} \bigr] \psi
 |\psi|^{p-1} = 0,
\end{equation}
where $\Phi_*$ denotes the push-forward by the diffeomorphism
$\Phi$. On the other hand, it follows directly from \eqref{eq:ct}
that if $\psi \in C^2(K^-)$ is a classical solution of the transformed
equation~\eqref{eq:ctwav} on $K^-$, the function $\phi:= \Omega \Phi^*
\psi \in C^2(K^+)$ solves the original equation~\eqref{eq:wav} on $K^+$.

\section{Evolution of the pseudo-energy} \label{sec:evolE}

As mentioned introductorily, for equations of the form \eqref{eq:gwav} there
is a \emph{conserved} energy~\eqref{eq:gerg} that in particular controls the
spatial $L^{p+1}$-norm of the solution. One can no longer expect to find
such a quantity for the conformally transformed equation~\eqref{eq:ctwav}
or, more generally, for an equation
\begin{equation} \label{eq:vwav}
 \Box \psi + c(t,r) \psi |\psi|^{p-1} = 0
\end{equation}
with a non-negative function $c$ of class $C^2$. However, an analogue of
the energy expression, although not conserved exactly, will turn out to be
sufficient to prove boundedness of $\psi$ on the relevant region of $K^-$,
provided the function $c$ is monotonically decreasing in $t$ there.
This pseudo-energy will now be defined.

Let $\psi \in C^2(K^-)$ be a classical solution of \eqref{eq:vwav} on
$K^-$ for $p>2$.  Then the vector field $\E$ given by \[ \E:= r^2 \biggl[
\frac12 (\d_t \psi)^2 + \frac12 (\d_r \psi)^2 + \frac1{p+1} c |\psi|^{p+1}
\biggr] \d_t - r^2 \bigl( \d_t \psi \d_r \psi \bigr)\d_r \] is continuously
differentiable and
\begin{align*}
 \Div \E & = r^2 \Bigl[ \Box \psi + c \psi |\psi|^{p-1} \Bigr] \d_t \psi
  + \frac1{p+1} r^2 (\d_t c) |\psi|^{p+1} \\
 & = \frac1{p+1} r^2 (\d_t c) |\psi|^{p+1}
\end{align*}
holds on $K^-$. Assume furthermore that $\d_t c \leq 0$ is non-positive
such that the same is true for $\Div \E$. This implies, recalling
Figure~\ref{fig:ct}, together with the assumption that the support $I$ of
the initial data is compactly contained in $\{1\} \times [0,\alpha_p[$, that
\begin{align*}
 0 & \geq \int_S (\Div \E) dt \wedge dr = \int_{\d S} i_\E (dt \wedge dr)
     = \int_{J \cup H} \Bigl[ (i_\E dt) dr - (i_\E dr) dt \Bigr] \\
   & = \int_J  r^2 \biggl[ \frac12 (\d_t \psi)^2 + \frac12 (\d_r \psi)^2 +
     \frac1{p+1} c |\psi|^{p+1} \biggr] dr \\
   & \quad - \frac{p-2}2 \int_I (i_\E dt) \circ \Phi(1+r,1-r) \cdot \biggl[
    \frac1{(1-r)^{p-1}} + \frac1{(1+r)^{p-1}} \biggr] dr \\
   & \quad - \frac{p-2}2 \int_I (i_\E dr) \circ \Phi(1+r,1-r) \cdot \biggl[
    \frac1{(1-r)^{p-1}} - \frac1{(1+r)^{p-1}} \biggr] dr,
\end{align*}
where $i_\E$ denotes the interior multiplication with the vector field $\E$.
The last two integrals over the interval $I$ depend only on the initial data
$\phi_0$ and $\phi_1$ on $I$ and are certainly finite. Thus
\begin{equation} \label{eq:Ebd}
 E_0:= \int_J  r^2 \biggl[ \frac12 (\d_t \psi)^2 + \frac12 (\d_r \psi)^2 +
 \frac1{p+1} c |\psi|^{p+1} \biggr] dr \leq C
\end{equation}
for a constant $C$.

The pseudo-energy $E_0$ now controls light-cone integrals of the quantity
$c|\psi|^{p+1}$ according to the following Proposition~\ref{lci}, a fact
that will be essential for proving boundedness of $\psi$ on the region
$K:=K^- \cap \{-1\leq t<0\}$.
\begin{Prop} \label{lci}
 Let $\psi \in C^2(K)$ be a solution of \eqref{eq:vwav} with initial data
 supported on $J$ satisfying the estimate~\eqref{eq:Ebd}. If $\d_t c \leq
 0$ on $K$ then \[ \frac1{p+1} \int_{-1}^{t_0} \bigl[ r^2 c |\psi|^{p+1}
 \bigr](s,t_0-s) ds \leq E_0\] holds for any $-1\leq t_0 < 0$.
\end{Prop}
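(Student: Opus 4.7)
The plan is to apply the divergence theorem to the vector field $\E$ on the closed triangular region
$$ R := \{(t,r) : -1 \leq t \leq t_0,\ 0 \leq r \leq t_0 - t\}, $$
whose boundary consists of three pieces: the initial segment $J_0 := \{-1\}\times[0, t_0+1]$ (a sub-interval of the line $\{t=-1\}$ which by the domain-of-dependence property captures all initial data relevant for $R$), the incoming null segment $N := \{(s, t_0-s) : -1 \leq s \leq t_0\}$, and the axis piece $\{r = 0,\ -1 \leq t \leq t_0\}$. Finite speed of propagation applied to the compactly supported initial data on $J$ ensures $\psi|_R$ is compactly supported, so all surface integrals below are finite.

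Since $\d_t c \leq 0$ on $K$, the computation of $\Div \E$ recalled just above~\eqref{eq:Ebd} gives $\Div \E \leq 0$ pointwise on $R$. Stokes' theorem therefore yields
$$ \int_R (\Div \E)\,dt \wedge dr = \oint_{\d R}\bigl[(i_\E dt)\,dr - (i_\E dr)\,dt\bigr] \leq 0, $$
with $\d R$ oriented compatibly with $dt \wedge dr$. The axis contribution vanishes identically because both components of $\E$ carry an overall $r^2$ factor. On $J_0$ one has $dt = 0$, and the integrand reduces to precisely the non-negative energy density appearing in the definition of $E_0$; since $J_0$ is a sub-interval of $\{-1\}\times[0,1[$ and the density is pointwise non-negative, this contribution is majorized in absolute value by $E_0$, and under counterclockwise traversal of $\d R$ it enters the boundary integral with an overall minus sign.

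The informative piece is the null segment. Parametrizing $N$ by $s\mapsto(s, t_0 - s)$ with $dt = ds$ and $dr = -ds$, the cross term $-F^r \,dt$ combines with the kinetic part of $F^t \,dr$ into the perfect square $\tfrac12 (\d_t\psi - \d_r\psi)^2$, so the null contribution becomes
$$ \int_{-1}^{t_0} r^2 \Bigl[\tfrac12 (\d_t \psi - \d_r \psi)^2 + \tfrac{1}{p+1}\, c\,|\psi|^{p+1}\Bigr](s, t_0-s)\, ds, $$
both summands of which are non-negative since $c \geq 0$. Combining the three boundary pieces via the inequality above, discarding the manifestly non-negative $(\d_t\psi - \d_r\psi)^2$ term from the null integral, and rearranging gives the asserted bound.

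The only real obstacle is bookkeeping: choosing the orientation on $\d R$ so that the null and initial contributions enter with the signs just described, and verifying that the truncated initial integral is dominated by $E_0$ (which is automatic because the integrand is pointwise non-negative and the support of the initial data on $J$ is compact). Beyond that, the argument is driven entirely by the sign of $\d_t c$ together with the non-negativity of $c$, and no further analytic difficulty arises.
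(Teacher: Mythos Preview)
Your proof is correct and is essentially the same argument as the paper's: both amount to applying the divergence theorem to $\E$ on the triangle $R=\{-1\le t\le t_0,\ 0\le r\le t_0-t\}$, with the axis term vanishing by the $r^2$ factor, the initial term bounded by $E_0$, and the null term yielding the nonnegative flux $\int_{-1}^{t_0} r^2\bigl[\tfrac12(\d_t\psi-\d_r\psi)^2+\tfrac1{p+1}c|\psi|^{p+1}\bigr](s,t_0-s)\,ds$. The paper merely carries this out by hand---defining the truncated energies $E_{t_0}(t)=\int_0^{t_0-t}(i_\E dt)(t,r)\,dr$, computing $E_{t_0}'$, and integrating in $t$---while you invoke Stokes directly; the content is identical.
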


\begin{proof}
It is useful to consider the functions \[ E_{t_0}(t) := \int_0^{t_0-t} (i_\E
dt)(t,r) dr \] for any $-1\leq t \leq t_0<0$. Then, for a fixed $-1<t_0<0$,
$E_{t_0}$ is continuously differentiable and
\begin{align*}
 E'_{t_0}(t) & = - (i_\E dt)(t,t_0-t) - \int_0^{t_0-t} \bigl[ \d_r(i_\E dr)
  \bigr] (t,r) dr + \int_0^{t_0-t} (\Div \E)(t,r) dr \\
 & = -(t_0-t)^2 \biggl[ \frac12 (\d_t \psi-\d_r \psi)^2 +  \frac1{p+1}
  c |\psi|^{p+1} \biggr] (t,t_0-t) + \int_0^{t_0-t} (\Div \E)(t,r) dr
\end{align*}
holds for all $-1 \leq t \leq t_0$. Hence, it follows that
\begin{align*}
 \Flux(-1,t_0) & := \int_{-1}^{t_0} i_\E (dt + dr) (s,t_0-s) ds \\
 & \phantom{:} = \int_{-1}^{t_0} (t_0-s)^2 \biggl[ \frac12 (\d_t \psi-\d_r
  \psi)^2 + \frac1{p+1} c |\psi|^{p+1} \biggr] (s,t_0-s) ds \\
 & \phantom{:} = - \int_{-1}^{t_0} E'_{t_0}(s)ds + \int_{-1}^{t_0}
  \int_0^{t_0-t} (\Div \E)(s,r) dr ds \\
 & \phantom{:} = E_{t_0}(-1) + \int_{-1}^{t_0} \int_0^{t_0-t} (\Div \E)(s,r)
  dr ds \leq E_0,
\end{align*}
since $E_{t_0}(t_0)=0$, $E_{t_0}(-1) \leq E_0$ and $\Div \E \leq 0$. But
this implies \[ \frac1{p+1} \int_{-1}^{t_0} (t_0-s)^2 \bigl[ c |\psi|^{p+1}
\bigr](s,t_0-s) ds \leq \Flux(-1,t_0) \leq E_0, \] and hence the claim.
\end{proof}

\section{Boundedness}

Using the light-cone estimate established in Proposition~\ref{lci} the
boundedness of $\psi$ in the region $K=K^- \cap \{-1 \leq t < 0\}$ can be
shown by a simple argument going back to Pecher \cite{Pecher}.

\begin{Theorem} \label{bound}
 Let $\psi \in C^2(K)$ be a solution of \eqref{eq:vwav} with $2 < p < 5$ and
 initial data supported on $J$ satisfying the estimate~\eqref{eq:Ebd}. If
 $c \geq 0$ is uniformly bounded and $\d_t c \leq 0$ on $K$ then $\psi$
 is uniformly bounded.
\end{Theorem}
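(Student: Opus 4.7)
The plan is to carry out a bootstrap argument in the spirit of Pecher. Set $M(t_0) := \sup\{|\psi(t',r)| : (t',r) \in K,\ -1 \leq t' \leq t_0\}$ for $t_0 \in [-1,0)$. I aim to establish an inequality of the form $M(t_0) \leq A + B\, M(t_0)^{\alpha}$ with $\alpha \in (0,1)$ and $A,B$ independent of $t_0$, which forces $M$ to be uniformly bounded.

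At a point $P = (t_0,r_0) \in K$, I would begin by writing the $3+1$ radial Duhamel representation $\psi(P) = \psi_L(P) + \psi_{NL}(P)$, where $\psi_L$ is the free evolution of the initial data on $J$ (bounded on $K$ by a constant $C_L$ depending only on the initial data, since $\psi_L$ is smooth in a neighbourhood of the compact closure of $K$), and
\[ |\psi_{NL}(P)| \leq \frac{1}{2 r_0} \int_{-1}^{t_0}\int_{|r_0-(t_0-s)|}^{r_0+(t_0-s)} \rho\, c\, |\psi|^{p}(s,\rho)\, d\rho\, ds. \]
The past light cone of $P$ intersected with $\{s \geq -1\}$ lies entirely within $K$ (since $r_0 < -t_0$ propagates backwards to $|y|< -s$), so $|\psi|$ in the integrand is controlled by $M(t_0)$.

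The heart of the argument is a Hölder estimate on $\psi_{NL}$. For a parameter $\beta$ to be chosen, use $|\psi|^{p} \leq M(t_0)^{p-\beta}\,|\psi|^{\beta}$, and apply Hölder's inequality with conjugate exponents $(p+1)/\beta$ and $(p+1)/(p+1-\beta)$ to split
\[ \iint \rho\, c\, |\psi|^{\beta}\, d\rho\, ds \leq \Bigl(\iint \rho^{2}c\,|\psi|^{p+1} d\rho\, ds\Bigr)^{\beta/(p+1)} \Bigl(\iint c\,\rho^{\gamma}\, d\rho\, ds\Bigr)^{(p+1-\beta)/(p+1)}, \]
with $\gamma := (p+1-2\beta)/(p+1-\beta)$. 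Proposition~\ref{lci}, applied along each outgoing characteristic $u=\textnormal{const}$ foliating the past of $P$, controls the first integral by $E_0$ times the $u$-extent $u_0-v_0 = 2r_0$ of the past cone. The second integral is a purely geometric quantity that, provided $\gamma > -1$, is integrable near the axis; a direct calculation over the past light cone exploits the $3+1$ spherical volume factor (which also scales as $r_0$) to cancel the $1/r_0$ in front. Distinguishing the sub-cases $v_0 \geq -1$ and $v_0 < -1$ (past cone reaching the axis first or the initial surface first) yields in both cases
\[ |\psi_{NL}(P)| \leq C\, M(t_0)^{p-\beta}, \qquad\textnormal{hence}\qquad M(t_0) \leq C_L + C\, M(t_0)^{p-\beta}. \]

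The bootstrap closes provided $p-\beta < 1$ (so $\beta > p-1$) and $\gamma > -1$ (so $\beta < 2(p+1)/3$). The interval $(p-1,\, 2(p+1)/3)$ is nonempty exactly when $p < 5$, the standing hypothesis, so any admissible $\beta$ gives the desired uniform bound. The principal obstacle—and the sole place where the subcritical restriction $p < 5$ enters—is identifying the Hölder split that simultaneously furnishes a sub-unit bootstrap exponent and a spatial remainder integrable near the axis; once this is in hand, the $r_0$-scalings of the two Hölder factors conspire to cancel the $1/r_0$ of the Duhamel formula, leaving a clean inequality that closes by Young's inequality.
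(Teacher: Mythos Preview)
Your proposal is correct and is essentially the paper's own argument, reparametrized: the paper applies H\"older with an exponent $q\in\bigl(3/2,\,(p+1)/(p-1)\bigr)$ and then extracts $\sup|\psi|$ to the power $\gamma q=pq-(p+1)$, whereas you first extract $M^{p-\beta}$ and then apply H\"older with exponent $(p+1)/\beta$; setting $\beta=(p+1)/q$ makes the two computations identical, including the bootstrap exponent $p-\beta=\gamma$, the condition $\gamma>-1$ on the remainder integral (equivalently $q>3/2$), and the exact cancellation of the $1/r_0$ factor via the $2r_0$ scaling of each H\"older piece.
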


\begin{proof}
Let $\psi_0 \in C^2(K)$ be a classical solution of the homogeneous
equation $\Box \psi_0 = 0$ with the same initial data as $\psi$. Then, for
a fixed $(t,r) \in K$, it holds that \[ |\psi-\psi_0|(t,r) \leq \frac1{2r}
\int_{-1}^t \int_{|t-s-r|}^{t-s+r} y c(s,y) |\psi|^p(s,y) dy ds. \] Due to the
fact that $2 < p < 5$ there exists a $q$ with $3/2 < q < (p+1)/(p-1)$.
Changing variables and applying H\"older's inequality yields
\begin{equation} \label{eq:hoelder}
\begin{split}
 \lefteqn{\frac1{2r} \int_{-1}^t \int_{|t-s-r|}^{t-s+r} y c(s,y) |\psi|^p(s,y)
  dy ds = \frac1{2r} \int_a^{t+r} \int_{-1}^b (t_0-s) c(s,t_0-s) |\psi|^p(s,t_0-s)
  ds dt_0 } \quad \\
 & \leq \frac1{2r} \biggl[ \int_a^{t+r} \int_{-1}^b (t_0-s)^2 c^q(s,t_0-s)
  |\psi|^{pq}(s,t_0-s) ds dt_0 \biggr]^\frac1q \biggl[ \int_a^{t+r}
  \int_{-1}^b (t_0-s)^\frac{q-2}{q-1} ds dt_0 \biggr]^\frac{q-1}q,
\end{split}
\end{equation}
where the abbreviations $a:=\max\{t-r,\,r-t-2\}$ and $b:=\bigl[
t_0+(t-r)\bigr]/2$ were introduced. Consider the first integral. Since $q>1$
and $c$ is bounded, so is $c^{q-1}$. Furthermore, $0<pq-(p+1)<q$, so that
\begin{align*}
 \lefteqn{\int_a^{t+r} \int_{-1}^b (t_0-s)^2 c^q(s,t_0-s) |\psi|^{pq}(s,t_0-s)
  ds dt_0 } \quad \\
 & \leq C \biggl( \sup_{-1\leq \tau\leq t} \|\psi(\tau,\cdot)\|_{L^\infty}
  \biggr)^{\gamma q} \int_a^{t+r} \int_{-1}^b (t_0-s)^2 c(s,t_0-s)
  |\psi|^{p+1}(s,t_0-s) ds dt_0,
\end{align*}
where $0<\gamma<1$ is such that $\gamma q = pq-(p+1)$. The integration
variable $t_0$ takes values $t_0 \geq a$ which ensures $-1 \leq b
\leq t_0$.  Thus, the integral in $s$ can be estimated by virtue of
Proposition~\ref{lci} to yield \[ \int_a^{t+r} \int_{-1}^b (t_0-s)^2
c(s,t_0-s) |\psi|^{p+1}(s,t_0-s) ds dt_0 \leq \int_{t-r}^{t+r} (p+1) E_0 dt_0
\leq 2rCE_0. \] The second integral in \eqref{eq:hoelder} can be calculated
directly to give \[ \int_a^{t+r} \int_{-1}^b (t_0-s)^\frac{q-2}{q-1} ds dt_0
\leq \frac{q-1}{2q-3} \int_{t-r}^{t+r} (1+t_0)^\frac{2q-3}{q-1} dt_0 \leq 2
r C \] because $q>3/2$. To sum up, the estimate \[ |\psi-\psi_0|(t,r) \leq
\frac C{2r} \biggl( \sup_{-1\leq \tau\leq t} \|\psi(\tau,\cdot)\|_{L^\infty}
\biggr)^\gamma \bigl( 2rE_0 \bigr)^\frac1q \bigl( 2r \bigr)^\frac{q-1}q = C
E_0^\frac1q \biggl( \sup_{-1\leq \tau\leq t} \|\psi(\tau,\cdot)\|_{L^\infty}
\biggr)^\gamma \] holds true for any $(t,r) \in K$. But since the
solution $\psi_0$ of the homogeneous equation is clearly bounded and
$0 < \gamma < 1$ this estimate implies the boundedness of $\psi$ itself
uniformly on $K$, for the function $t \mapsto \sup_{-1\leq \tau\leq t}
\|\psi(\tau,\cdot)\|_{L^\infty}$ is continuous and finite at $t=-1$.
\end{proof}

With the function $\psi$ bounded on $K$ the decay of $\phi$ in the future of
$I$ follows immediately.

\begin{Cor} \label{decay}
 Let $\phi$ be a classical solution of the wave equation~\eqref{eq:wav}
 for $3\leq p < 5$ with initial data $\phi_0 \in C^2(\Rp)$ and $\phi_1
 \in C^1(\Rp)$ given at $t=1$ and which exists globally towards the
 future. Assume that the support of $\phi_0$ and $\phi_1$ is contained in
 the compact interval $I \subset [0,\,\alpha_p[$. Then there is a constant $C
 \in \Rp$ such that $\phi$ satisfies the decay estimate 
 \begin{equation}\label{eq:main-estimate-Cor}
   |\phi(t,r)| \leq \frac C{(1+t+r)(1+t-r)^{p-2}}
 \end{equation}
 for all $t\geq 1$ and $0\leq r \leq t$.
\end{Cor}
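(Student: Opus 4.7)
The overall strategy is to transform $\phi$ via the conformal map of Section~\ref{sec:ct}, apply Theorem~\ref{bound} to the transformed field $\psi$, and then invert the transformation to recover a pointwise bound on $\phi$. Set $\psi:=\Phi_*(\Omega^{-1}\phi)$; by the discussion in Section~\ref{sec:ct}, $\psi\in C^2$ is a classical solution on the $\Phi$-image of the future of $I$ of the transformed equation~\eqref{eq:ctwav}, which has the form~\eqref{eq:vwav} with
\begin{equation*}
 c(\w{t},\w{r})=\frac{1}{(p-2)^2}\left[\frac{\w{u}-\w{v}}{(-\w{v})^{1/(p-2)}-(-\w{u})^{1/(p-2)}}\right]^{p-1}.
\end{equation*}
The plan is then to use Theorem~\ref{bound} on $K=K^-\cap\{-1\leq t<0\}$ to obtain $|\psi|\leq\w{C}$ on $K$, and transfer this back to $\phi$ via $\phi=\Omega\,\Phi^*\psi$.

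To invoke Theorem~\ref{bound} I must verify (a) $c\geq 0$ uniformly bounded on $K$; (b) $\d_{\w{t}}c\leq 0$ on $K$; and (c) finiteness of the pseudo-energy~\eqref{eq:Ebd} on $J$. Setting $s:=-\w{u}$, $w:=-\w{v}$ and $a:=1/(p-2)\in(0,1]$ (this is where the hypothesis $p\geq 3$ enters) writes $c=F^{p-1}/(p-2)^2$ with $F(s,w):=(w-s)/(w^a-s^a)$ on $0<s\leq w\leq 2$. The mean-value theorem applied to $x\mapsto x^a$ gives $w^a-s^a\geq a w^{a-1}(w-s)$, whence $F\leq w^{1-a}/a$ is bounded, proving (a). A direct computation yields
\begin{equation*}
 \d_s F+\d_w F=\frac{a(w-s)(s^{a-1}-w^{a-1})}{(w^a-s^a)^2}\geq 0,
\end{equation*}
since $a\in(0,1]$ and $s\leq w$; combined with $\d_{\w{t}}=-(\d_s+\d_w)$ this proves (b). Claim (c) follows from the assumed global existence and $C^2$-regularity of $\phi$: since $\Phi$ and $\Omega$ are analytic on the preimage of the compact region $S$, the traces of $\psi$, $\d_t\psi$, $\d_r\psi$ on the compact interval $J$ are continuous, making $E_0$ finite.

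Theorem~\ref{bound} then gives $|\psi|\leq\w{C}$ on $K$, so $|\phi(t,r)|\leq\w{C}\,\Omega(t,r)$ on $\Phi^{-1}(K)\subset K^+$. Applying the mean-value theorem once more to $x\mapsto x^{p-2}$ yields $u^{p-2}-v^{p-2}\leq(p-2)u^{p-3}(u-v)$ for $u\geq v\geq 0$ and $p\geq 3$, whence
\begin{equation*}
 \Omega=\frac{u^{p-2}-v^{p-2}}{(u-v)(uv)^{p-2}}\leq\frac{p-2}{(t+r)(t-r)^{p-2}}.
\end{equation*}
By finite propagation speed $\mathrm{supp}\,\phi(t,\cdot)$ lies in $\{r:t-r\geq \delta\}$ with $\delta:=1-\sup I>1-\alpha_p>0$, so $1+t-r\leq C_p(t-r)$ on the support; together with $1+t+r\leq 2(t+r)$ (from $t\geq 1$) this yields~\eqref{eq:main-estimate-Cor} on $\Phi^{-1}(K)$. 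The remaining part of $\mathrm{supp}\,\phi$ is bounded (the lower bound on $t-r$ forces an upper bound on $t+r$ outside $\Phi^{-1}(K)$), so there $\phi$ is bounded by continuity while $(1+t+r)(1+t-r)^{p-2}$ is bounded above, and \eqref{eq:main-estimate-Cor} holds by enlarging $C$; outside $\mathrm{supp}\,\phi$ the estimate is vacuous.

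The hard part is really the joint verification of boundedness of $c$ and its monotonicity in $\w{t}$: the exponent $p-2$ in~\eqref{eq:uv-map} was tuned precisely so that both properties just barely hold, and any larger exponent would destroy one of them, as remarked after~\eqref{eq:wave-mapped} in the introduction. Beyond this algebraic check, the argument is a direct combination of Theorem~\ref{bound} with the explicit asymptotics of $\Omega$.
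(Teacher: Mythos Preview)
Your proof is correct and follows essentially the same route as the paper's: apply the conformal map of Section~\ref{sec:ct}, verify that for $p\ge 3$ the coefficient $c$ is bounded with $\partial_{\tilde t}c\le 0$, invoke Theorem~\ref{bound}, and then bound $\Omega\le (p-2)/(uv^{p-2})$. The only cosmetic differences are that you use the mean value theorem where the paper writes out the equivalent algebraic inequalities directly, and you handle the finite leftover region $S$ on the $K^+$ side (via boundedness of $\Phi^{-1}(S)\cap\operatorname{supp}\phi$) rather than on the $K^-$ side as the paper does.
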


\begin{proof}
Given such a solution $\phi$ of class $C^2$, it was shown in
section~\ref{sec:ct} that its conformal transformation $\psi =
\Phi_* (\Omega^{-1} \phi)$ is a classical solution of the wave
equation~\eqref{eq:vwav} on the future of the curve $H$ in $K^-$,
cf. Figure~\ref{fig:ct}, with \[ c = \frac1{(p-2)^2}\Phi_* (\Omega u v)^{p-1}
\] according to equation~\eqref{eq:ctwav}. Now, for $v<u<0$ \[ \Phi_* (\Omega
u v) = \frac{u-v}{(-v)^\frac1{p-2} - (-u)^\frac1{p-2}}, \] so that $c$
is bounded on the future of $H$ in $K^-$ owing to $p\geq 3$. Furthermore,
\[ \d_t \bigl[ \Phi_* (\Omega u v) \bigr] = \frac1{p-2} \frac{\Phi_*
(\Omega u v)^2}{2r} \Bigl[ (-v)^{-\frac{p-3}{p-2}} - (-u)^{-\frac{p-3}{p-2}}
\Bigr], \] which implies $\d_t c \leq 0$ on $K^-$ again by reason of $p\geq
3$. Moreover, as detailed in section~\ref{sec:evolE}, the assumptions on the
support of $\phi_0$ and $\phi_1$ guarantee that $J$ is compactly contained
in $\{-1\} \times [0,1[$ and that the estimate~\eqref{eq:Ebd} holds. Thus,
Theorem~\ref{bound} applies and yields boundedness of $\psi$ on $K$. Since
$\psi$ is also bounded on the compact region $S$ it follows that $\psi$
is bounded on the whole future of $H$ in $K^-$, say $|\psi| \leq C$. But
then \[ |\phi(t,r)| = \Omega(t,r) |\Phi^* \psi(t,r)| \leq C \Omega(t,r) \]
on the whole future of $I$ in $K^+$, while  
\[ \Omega(t,r) = \frac1{2rv^{p-2}} \biggl[
1- \Bigl(\frac vu \Bigr)^{p-2} \biggr] \leq \frac{p-2}{2rv^{p-2}} \biggl(
1-\frac vu \biggr) = \frac{p-2}{uv^{p-2}}. \] 
Since in this region $u\geq 1$ and $v\geq 1-\alpha_p$ and outside, for
$v<1-\alpha_p$, the solution vanishes identically, the bound for $\phi(t,r)$
can be written in the regularized form \eqref{eq:main-estimate-Cor}.

\end{proof}

\section{Discussion}

As already noted in the introduction, the conformal transformations which we consider for the radial problem are not conformal for the full problem, beyond the spherical symmetry, unless $p=3$. However, for the full problem with $p>3$ we can still apply the conformal ($p=3$) transformation and obtain a global pointwise decay $1/(uv)$. The decay rate is then not optimal but can act as a prerequisite for a more refined asymptotic analysis. Indeed, that decay rate is sufficient for the solution to become small at large times in the sense which allows a perturbative analysis with methods similar to those developed in \cite{NS-Tails}. We want to address this issue in a forthcoming publication.

It seems, at least as far as the spherical symmetry is concerned, that our method can be applied in higher than $n=3$ odd dimensions.

Also the linear wave equations with strong positive potentials 
$$ \d_t^2 \phi - \Delta \phi + V(x) \phi = 0 $$
having prescribed decay at spatial infinity $V(x)\sim 1/|x|^k$ still lack a sharp pointwise decay estimate and seem to be treatable with our method after minor modifications. Such equations have a positive definite energy, too, and can be conformally transformed to a form analogous to \eqref{eq:vwav} with a regular positive function $c$. This idea shall be addressed in another publication.

\begin{acknowledgments}
One of the authors (NS) wants to express his gratitude to the Mittag-Leffler Institute and to the organizers of the workshop ``Geometry, Analysis, and General Relativity'' (fall 2008) for hospitality and fantastic, creative and relaxed atmosphere during the work which resulted in this article.
\end{acknowledgments}

\bibliography{biblio}
\bibliographystyle{alpha}

\end{document}